\documentclass[12pt,reqno]{amsart}
\usepackage{pgfplots}
\usepackage{tikz}
\pgfplotsset{compat=newest}
\pgfplotsset{ytick style={draw=none}}
\pgfplotsset{xtick style={draw=none}}
\topmargin=0.02cm
\textwidth =  17cm
\textheight = 23cm
\baselineskip=11pt
\setlength{\oddsidemargin}{0.01 pt}
\setlength{\evensidemargin}{0.01 pt}

\usepackage{amsmath,amsfonts,amssymb,amsthm}
\usepackage{graphicx}
\graphicspath{ }
\usepackage{mathrsfs}
\usepackage{epstopdf}
\usepackage{csquotes}
\usepackage{wrapfig}
\usepackage{accents}
\usepackage{caption}
\usepackage{subcaption}
\usepackage{calligra}
\usepackage[colorlinks]{hyperref}
\usepackage{kantlipsum}
\usepackage{cleveref}
\hypersetup{colorlinks, 
	breaklinks,
	linkcolor=red,
	citecolor=red,
	linktocpage=true}

\numberwithin{figure}{section}

\theoremstyle{plain}
\newtheorem{thm}{Theorem}[section]
\newtheorem{lem}[thm]{Lemma}

\theoremstyle{definition}
\newtheorem{defn}{Definition}[section]

\theoremstyle{remark}
\newtheorem{rem}{Remark}

\usepackage{mathtools}
\makeatletter
\@namedef{subjclassname@2020}{%
	\textup{2020} Mathematics Subject Classification}
\makeatother
\title[BK-module via $p$-divisible group]{%
	Properties of Breuil-Kisin modules inherited by $p$-divisible groups}

\author[M. A. Sarkar]{MABUD ALI SARKAR}
\address{Department of Mathematics\\ The University of Burdwan \\ Burdwan-713101, India.}
\email{mabudji@gmail.com}
\author[A. A. Shaikh]{Absos Ali Shaikh} 
\address{Department of Mathematics,\\ The University of Burdwan,\\  Burdwan-713101, India.}
\email{aashaikh@math.buruniv.ac.in}

\begin{document} 
	
	\begin{abstract}
		In this paper, by assuming a faithful action of a finite flat $\mathbb{Z}_p$-algebra $\mathscr{R}$ on a $p$-divisible group $\mathcal{G}$ defined over the ring of $p$-adic integers $\mathscr{O}_K$, we construct a category of new Breuil-Kisin module $\mathfrak{M}$ defined over the ring $\mathfrak{S}:=W(\kappa)[\![u]\!]$ and study the freeness and projectiveness properties of such a module. 
	\end{abstract}
	
	\subjclass[2020]{11F80, 11F85, 14L05}
	\keywords{p-adic numbers, Breuil-Kisin module, p-divisible group, p-adic Tate module}
	\maketitle
	\section{Introduction and motivation} \label{s1}
	For a fixed prime numebr $p$, consider the $p$-adic field $\mathbb{Q}_p$ with ring of integers $\mathbb{Z}_p$ and residue field $\mathbb{F}_p$. Let $\pi \in \mathscr{O}_K$ be an uniformizer of the ring of integers $\mathscr{O}_K$ in a finite extension $K$ of $\mathbb{Q}_p$ with $\kappa \cong \mathscr{O}_K/\pi \mathscr{O}_K$ being the residue field of $K$. Let $E(u)$ be the minimal (Eisenstein) polynomial of $\pi$ over $W(\kappa)$, where $W(\kappa)$ is the ring of Witt vectors of the residue field $\kappa$. We denote by $\mathfrak{S}:=W(\kappa)[\![u]\!]$ as the ring of power series over $W(\kappa)$. Let $\varphi_{\mathfrak{S}}: W(\kappa)[\![u]\!] \to W(\kappa)[\![u]\!]$ be the ring endomorphism which extends the Frobenious on $W(\kappa)$ satisfying $\varphi_{\mathfrak{S}}(u)=u^p$.

	Classification of $p$-divisible groups or Barsotti-Tate groups is a useful tool in the study of arithmetic geometry and $p$-adic Hodge theory. For example, given an abelian scheme $S$, consider the inductive system $\{(S[p^n]), S[p^n] \to S[p^{n+1}] )\}_{n}$, where $S[p^n]$ denotes the $p^n$-power torsion points on $S$. This gives rise to the $p$-divisible group $S[p^{\infty}]$, which inherits lots of information about the Abelian scheme. That is why it is important to study $p$-divisible groups. There are several classifications of $p$-divisible groups through linear algebraic data. One such category of important linear algebraic objects that classifies the $p$-divisible groups is the category of Breuil-Kisin modules defined over the ring of Witt vectors $\mathfrak{S}=W(\kappa)[\![u]\!]$. In fact, the category of $p$-divisible groups $\text{BT}(\mathscr{O}_K)$ over $\mathscr{O}_K$ is equivalent to the category of Breuil-Kisin modules $\text{BT}_{/\mathfrak{S}}^{\varphi}$, which was conjectured by Breuil \cite{Bre} and proved by Kisin \cite{Kis}. More precisely, the Breuil-Kisin module is an important tool in studying Galois representations.

	Let us consider the Breuil-Kisin module $\mathfrak{M}$ over $\mathfrak{S}$.  For a finite flat $\mathbb{Z}_p$-algebra $\mathscr{R}$, consider the $p$-divisible group $\mathcal{G}$ defined over $\mathscr{O}_K$ endowed with a faithful $\mathscr{R}$-action. Since the category of such modules $\mathfrak{M}$ is equivalent to the category of $p$-dvisible groups $\mathcal{G}$ over $\mathscr{O}_K$, the above $\mathcal{G}$ equipped with $\mathscr{R}$-action results a \textit{newly constructed}  $\mathfrak{S}_\mathscr{R}:=(\mathscr{R} \otimes_{\mathbb{Z}_p} W(\kappa)) [\![u]\!]$-module $\mathfrak{M}$. Therefore, we study the freeness and projectiveness properties of this newly produced Breuil-Kisin module over $(\mathscr{R} \otimes_{\mathbb{Z}_p}W(\kappa))[\![u]\!]$. However, the projectiveness and freeness property of the $(\mathscr{R} \otimes_{\mathbb{Z}_p} W(\kappa)) [\![u]\!]$-module $\mathfrak{M}$ does not follow for general $\mathscr{R}$. For example, consider the ring $\mathscr{R}=\mathbb{Z}_p[u]/(u^2)$. This acts as $0$ on any Breuil-Kisin module, and so the Breuil-Kisin module would not be free or even projective over $(\mathscr{R} \otimes_{\mathbb{Z}_p}W(\kappa))[\![u]\!]$ for such $\mathscr{R}=\mathbb{Z}_p[u]/(u^2)$. This suggests us $\mathscr{R}$ is a regular local ring. Another obstruction is that spectra $Spec(\mathscr{R})$ of $\mathscr{R}$ might not be connected. In that case, $\mathfrak{M}$ could be projective, but have different ranks on different components of spectra, and hence $\mathfrak{M}$ can not be free. Further the ring $(\mathscr{R} \otimes_{\mathbb{Z}_p} W(\kappa)) [\![u]\!]$ is not necessarily local, for example $W(\kappa) \otimes_{\mathbb{Z}_p} W(\kappa)$ decomposes as a direct product of $[\kappa:\mathbb{F}_p]$ copies of $W(\kappa)$. 
	Therefore, the natural question arises: \\ \hspace*{1 cm}$\text{ When is the Breuil-Kisin module $\mathfrak{M}$ projective and free over $(\mathscr{R} \otimes_{\mathbb{Z}_p} W(\kappa)) [\![u]\!]$ ? }$ \\ Kisin answers the above question for a few special cases (see Theorems \ref{t2.1}, \ref{t2.2}, \ref{T2.3}). There are more such considerations in (\cite{BL}, \cite{BL1}).
	The present paper provides some more details in the Subsection \ref{s2.1}.
	\section{Projectiveness and freeness of the objects in $\text{Mod}_{/\mathfrak{S}_\mathscr{R}}^{\varphi}$} \label{s4}
	Before we start our investigations on the projectiveness and freeness properties of the modules $\mathfrak{M} \in \text{Mod}_{/\mathfrak{S}_\mathscr{R}}^{\varphi}$, we recall some definitions:
	\begin{defn}
		A Breuil-Kisin module (BK-module for brevity) of height $h$ (positive integer) is a finitely generated and free $\mathfrak{S}$-module $\mathfrak{M}$ endowed with a $\varphi_{\mathfrak{S}}$-semilinear endomorphism $\varphi_{\mathfrak{M}}: \mathfrak{M} \to \mathfrak{M}$ such that $E(u)^h$ kills the cokernel $\mathfrak{M}/(1 \otimes \varphi_{\mathfrak{M}})(\mathfrak{M})$ (i.e., $\varphi_{\mathfrak{M}}$ is injective) of the  $\mathfrak{S}$-linearization $1 \otimes \varphi_{\mathfrak{M}}: \varphi_{\mathfrak{S}}^{*}(\mathfrak{M}) \to \mathfrak{M}$ of $\varphi_{\mathfrak{M}}$, where we denote $\varphi_{\mathfrak{S}}^{*}(\mathfrak{M}):=\mathfrak{S} \otimes_{\varphi_{\mathfrak{S}}, \mathfrak{S}}  \mathfrak{M}$. We also denote the category of such modules by $\text{Mod}_{/\mathfrak{S}}^{\varphi}$. For $h=1$, we call such modules as height $1$ Breuil-Kisin modules and such a category is denoted by $\text{BT}_{/\mathfrak{S}}^{\varphi}$ as mentioned above. In other words, $\text{BT}_{/\mathfrak{S}}^{\varphi}$ is a full subcategory of $\text{Mod}_{/\mathfrak{S}}^{\varphi}$.  \\
	\end{defn}
	\begin{rem}
		In literature, there are also slightly different versions of Breuil-Kisin modules used by different authors (e.g., \cite{Cai}, \cite{Gao}) in different perspectives. Some authors preferred only either the freeness property or the finiteness property. Some authors assumed $\mathfrak{M}$ to be $u$-torsion free while some authors inverted $E(u)$ in defining $\mathfrak{S}[1/E(u)]$-linearization map $\mathfrak{S} \otimes_{\varphi, \mathfrak{S}} \mathfrak{M}[1/E(u)] \to \mathfrak{M}[1/E(u)]$. Simply, we can think of the Breuil-Kisin module as a module over the ring $\mathfrak{S}$ with some impositions. The best way to understand the Breuil-Kisin modules (linear algebra data) is to look at the corresponding objects in the Galois representations such as crystalline Galois representations with different Hodge-Tate weights or torsion Galois representations. 
	\end{rem}
	\begin{defn}
		Let us denote $\text{Mod}_{/\mathfrak{S}_\mathscr{R}}^{\varphi}$ as the category of those modules from the category $\text{Mod}_{/\mathfrak{S}}^{\varphi}$ equipped with $\mathscr{R}$-action. That is every member, say $\mathfrak{M} \in \text{Mod}_{/\mathfrak{S}_\mathscr{R}}^{\varphi} $ is a $\mathfrak{S}_\mathscr{R}:=(\mathscr{R} \otimes_{\mathbb{Z}_p} W(\kappa)) [\![u]\!]$-module.  Then $\text{Mod}_{/\mathfrak{S}_\mathscr{R}}^{\varphi}$ forms a subcategory of $\text{Mod}_{/\mathfrak{S}}^{\varphi}$. 
	\end{defn}
	\begin{defn} \cite{Tate}
		Let $p$ be a prime number, and $d$ be a non-negative integer. A $p$-divisible group $\mathcal{G}$ over the ring of integers $\mathscr{O}_K$ of height $d$ is the inductive system $$\mathcal{G}=(\mathcal{G}_m,i_m), \ m \geq 0$$ with group scheme homomorphism $i_m: \mathcal{G}_m \to \mathcal{G}_{m+1}$, and satisfying the following criterions:
		\begin{enumerate}
			\item[(i)] $\mathcal{G}_m$ is a finite flat group scheme over the ring of integers $\mathscr{O}_K$ of order $p^{md}$ for all $m$,
			\item[(ii)] the sequence $$ 0 \to \mathcal{G}_m \xrightarrow{i_m} \mathcal{G}_{m+1} \xrightarrow{p^m}\mathcal{G}_{m+1}$$ is exact.
			We usually denote by $\text{BT}(\mathscr{O}_K)$ the category of $p$-divisible groups over $\mathscr{O}_K$. 
		\end{enumerate}
	\end{defn}

	\begin{defn} \cite{Tate}
		Let $\mathcal{G}$ be a $p$-divisible group over the ring of integers $\mathscr{O}_K$. Let $$ \cdots \xleftarrow{i_3}\mathcal{G}_3 \xleftarrow{i_2} \mathcal{G}_2 \xleftarrow{i_1}\mathcal{G}_1$$ be the inverse system associated to the $p$-divisible group $\mathcal{G}$. Then the $p$-adic Tate module of $\mathcal{G}$ is $$ T_p(\mathcal{G}):=\varprojlim_{m} \mathcal{G}_m(K^{alg}),$$ where we take the inverse limit over positive integers $m$ with the transition maps $i_m$ and $K^{alg}$ is the separable closure of $K$. Note that $T_p(\mathcal{G})$ is free $\mathbb{Z}_p$-module and it encodes all the $p$-power torsion points of $\mathcal{G}$ in $K^{alg}$. 
	\end{defn}

	Before producing the main results, to help the readers, we have collected the following three results (Theorem~ \ref{t2.1}, Theorem~\ref{t2.2} and Theorem \ref{T2.3}), which already appears in the literature. 
	\begin{thm} \label{t2.1}
		If the finite flat $\mathbb{Z}_p$-algebra $\mathscr{R}$ contains all the embeddings $\sigma$ of $W(\kappa)$, then $\mathfrak{M}$ is free $(\mathscr{R} \otimes_{\mathbb{Z}_p}W(\kappa))[\![u]\!]$-module.
	\end{thm}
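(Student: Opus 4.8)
The plan is to use the hypothesis on $\mathscr{R}$ to decompose the coefficient ring into a finite product, reduce freeness of $\mathfrak{M}$ to freeness of each factor (where the argument of Theorem~\ref{t4.1} applies), and then exploit the semilinear Frobenius on $\mathfrak{M}$ to see that the ranks on the various factors agree.

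First I would record what the hypothesis buys. Put $f=[\kappa:\mathbb{F}_p]$, so $W(\kappa)$ is finite \'etale over $\mathbb{Z}_p$ and a Teichm\"uller generator has a degree-$f$ minimal polynomial whose reduction modulo $p$ is separable over $\mathbb{F}_p$. Since $\mathscr{R}$ receives all $f$ embeddings $\sigma\colon W(\kappa)\to\mathscr{R}$, this polynomial splits over $\mathscr{R}$ into linear factors that remain pairwise coprime modulo $p$, so the Chinese Remainder Theorem gives
\[
\mathscr{R}\otimes_{\mathbb{Z}_p}W(\kappa)\;\cong\;\prod_{\sigma}\mathscr{R},\qquad
\mathfrak{S}_\mathscr{R}=(\mathscr{R}\otimes_{\mathbb{Z}_p}W(\kappa))[[u]]\;\cong\;\prod_{\sigma}\mathscr{R}[[u]].
\]
Writing $e_\sigma$ for the associated orthogonal idempotents and $\mathfrak{M}^{(\sigma)}:=e_\sigma\mathfrak{M}$, we get $\mathfrak{M}=\bigoplus_{\sigma}\mathfrak{M}^{(\sigma)}$, each $\mathfrak{M}^{(\sigma)}$ a module over $\mathscr{R}[[u]]$ which, as a direct summand of $\mathfrak{M}$, is finitely generated, $u$-torsion free, and $u$-adically complete and separated.

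Next I would prove that each $\mathfrak{M}^{(\sigma)}$ is \emph{free} over $\mathscr{R}[[u]]$. Since $\mathfrak{M}/u\mathfrak{M}$ is free over $W(\kappa)$ it is $p$-torsion free, hence so is the summand $\mathfrak{M}^{(\sigma)}/u\mathfrak{M}^{(\sigma)}$, and together with $u$-torsion freeness this makes $\{u,p\}$ a regular sequence on $\mathfrak{M}^{(\sigma)}$ over $\mathscr{R}[[u]]$. When $\mathscr{R}=\mathscr{O}_L$ for a finite extension $L/\mathbb{Q}_p$, so that $\mathscr{R}[[u]]$ is regular local of dimension $2$, the Auslander--Buchsbaum formula then forces $\operatorname{projdim}\mathfrak{M}^{(\sigma)}=0$, i.e.\ $\mathfrak{M}^{(\sigma)}\cong\mathscr{R}[[u]]^{r_\sigma}$, exactly as in Theorem~\ref{t4.1}. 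For a general finite flat local $\mathscr{R}$ for which the projectivity results above (in particular Theorem~\ref{t2.5}) apply, one instead has $\mathfrak{M}^{(\sigma)}$ finitely generated projective over $\mathscr{R}[[u]]$ with $\mathfrak{M}^{(\sigma)}/u\mathfrak{M}^{(\sigma)}$ finitely generated projective --- hence free, $\mathscr{R}$ being local --- over $\mathscr{R}$; lifting an $\mathscr{R}$-basis of this reduction by $u$-adic Nakayama gives a surjection $\mathscr{R}[[u]]^{r_\sigma}\twoheadrightarrow\mathfrak{M}^{(\sigma)}$, which splits by projectivity, and the complementary summand dies modulo $u$ and so is zero because $u\in\operatorname{Jac}(\mathscr{R}[[u]])$; again $\mathfrak{M}^{(\sigma)}\cong\mathscr{R}[[u]]^{r_\sigma}$.

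The last, and I expect hardest, step is to show all the $r_\sigma$ coincide. The Frobenius $\varphi_{\mathfrak{S}_\mathscr{R}}=\mathrm{id}_\mathscr{R}\otimes\varphi_{\mathfrak{S}}$ acts on $\mathscr{R}\otimes_{\mathbb{Z}_p}W(\kappa)$ through the Frobenius of $W(\kappa)$, which generates $\operatorname{Gal}(W(\kappa)/\mathbb{Z}_p)$ and therefore permutes the embeddings $\sigma$ --- and with them the idempotents $e_\sigma$ --- in a single cycle $\tau$ of length $f$. Semilinearity of $\varphi_{\mathfrak{M}}$ gives $\varphi_{\mathfrak{M}}(\mathfrak{M}^{(\sigma)})\subseteq\mathfrak{M}^{(\tau\sigma)}$, so restricting the linearization $1\otimes\varphi_{\mathfrak{M}}$ to the $e_{\tau\sigma}$-component realizes $\varphi^{*}\mathfrak{M}^{(\sigma)}$ --- a finite free $\mathscr{R}[[u]]$-module of the same rank $r_\sigma$ as $\mathfrak{M}^{(\sigma)}$, base change being along the finite free map $u\mapsto u^p$ --- as an $\mathscr{R}[[u]]$-submodule of $\mathfrak{M}^{(\tau\sigma)}$ with cokernel killed by $E(u)^h$; since $1\otimes\varphi_{\mathfrak{M}}$ is injective, no $\mathfrak{M}^{(\sigma)}$ can vanish unless $\mathfrak{M}=0$. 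Inverting $E(u)$ turns this into an isomorphism $\varphi^{*}\mathfrak{M}^{(\sigma)}[1/E(u)]\xrightarrow{\ \sim\ }\mathfrak{M}^{(\tau\sigma)}[1/E(u)]$, whence $r_\sigma=r_{\tau\sigma}$ for every $\sigma$, and transitivity of $\tau$ yields a common value $r$. Then
\[
\mathfrak{M}\;\cong\;\prod_{\sigma}\mathscr{R}[[u]]^{r}\;\cong\;\Bigl(\prod_{\sigma}\mathscr{R}[[u]]\Bigr)^{r}\;\cong\;\mathfrak{S}_\mathscr{R}^{\,r},
\]
so $\mathfrak{M}$ is free over $(\mathscr{R}\otimes_{\mathbb{Z}_p}W(\kappa))[[u]]$. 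The genuinely delicate point is this equal-rank argument --- carefully tracking how $\varphi$ permutes the factors and how the $E(u)^h$-torsion-cokernel condition is inherited componentwise; one also uses throughout that $\mathscr{R}$ is local (so that $\operatorname{Spec}\mathscr{R}$ is connected, the ranks $r_\sigma$ are unambiguous, and finite projective $\mathscr{R}$-modules are free), in line with the discussion in the introduction.
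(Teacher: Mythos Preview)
Your proposal is correct and follows essentially the same strategy as the paper: decompose $\mathfrak{S}_\mathscr{R}$ and $\mathfrak{M}$ along the embeddings $\sigma$, show each $\mathfrak{M}^{(\sigma)}$ is free over $\mathscr{R}[[u]]$ via the $\{u,p\}$-regular-sequence/Auslander--Buchsbaum argument of Theorem~\ref{t4.1}, and then use that Frobenius permutes the $\sigma$-components in a single cycle to force the ranks to agree. Your treatment is somewhat more detailed --- you justify the CRT splitting, offer a Nakayama-based alternative when $\mathscr{R}$ is local but not regular, and make the equal-rank step explicit by inverting $E(u)$ --- but the architecture and the key idea are the same as in the paper.
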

	
	\begin{proof}
		The torsion version of the above result appears in \cite[Lemma.~1.2.2] {Kis2}.
	\end{proof}
	
	\begin{thm} \label{t2.2}
		If $\mathscr{R}$ be the regular local ring $\mathscr{O}_K$ in finite extension $K$ of $\mathbb{Q}_p$, then $\mathfrak{M}$ is $(\mathscr{R} \otimes_{\mathbb{Z}_p} W(\kappa)) [\![u]\!]$-projective module. 
	\end{thm}
	
	\begin{proof}
		The torsion version of the above result appears in \cite[Lemma.~1.2.2] {Kis2}.
		It also appears in a different perspective and a disguised form (in the middle of the proof of \cite[Proposition~1.6.4]{Kis1}).
	\end{proof}

	We can recover the $p$-adic Tate module from Breuil-Kisin module easily by taking the $\varphi$-fixed points of $\mathfrak{M} \otimes_{\mathfrak{S}} W(\text{Frac}(\mathfrak{R}))$ i.e., $$T_p(\mathcal{G})=\left(\mathfrak{M} \otimes_{\mathfrak{S}} W(\text{Frac}(\mathfrak{R}))\right)^{\varphi=1} ,$$where $\text{Frac}(\mathfrak{R})$ defined below. From the work of \cite{Kis}, we also have an inverse process. That is, we start with a $p$-adic Tate module $T_p(\mathcal{G})$ of the $p$-divisible group $\mathcal{G}$ and get a Breuil-Kisin module. Hence the further question remains to check:
	$$\text{Is $(\mathscr{R} \otimes_{\mathbb{Z}_p} W(\kappa))$-module $\mathfrak{M}$ free if the $p$-adic Tate module $T_p(\mathcal{G})$ is free as $\mathscr{R}$-module ?}$$ 
	We can answer this question by lifting the category $\text{Mod}_{/\mathfrak{S}_\mathscr{R}}^{\varphi}$ of Breuil-Kisin module to \'etale level i.e., we consider the associated $\acute{e}$tale $\varphi$-modules.
		\begin{thm} \label{T2.3}
			If the $p$-adic Tate module $T_p(\mathcal{G})$ of a $p$-divisible group $\mathcal{G}$ over $\mathscr{O}_K$ is free then the associated $\acute{e}$tale $\varphi$-module is free.
		\end{thm}
	\begin{proof}
		The proof is a consequence of \cite[Lemma~1.2.7]{Kis2}.
	\end{proof}
	\subsection{Main Results} \label{s2.1}
	Throughout the paper, the term finite module means finitely generated module.
	As a consequence of Theorem \ref{t2.2}, we prove the following result:
	\begin{thm}
		Let $\mathscr{R}=\mathscr{O}_K$ be the ring of integers in the finite extension $K$ of $\mathbb{Q}_p$ and assume, there is a finite flat map $\mathscr{R} \to \mathscr{R}'$, where $\mathscr{R}'$ is also a regular local ring. Then any $(\mathscr{R}' \otimes_{\mathbb{Z}_p}W(\kappa))[\![u]\!]$-module $\mathfrak{M}$, which is finite free ((i.e., finitely generated and free) over $(\mathscr{R} \otimes_{\mathbb{Z}_p}W(\kappa))[\![u]\!]$, is also finite free $(\mathscr{R}' \otimes_{\mathbb{Z}_p}W(\kappa))[\![u]\!]$-module.
	\end{thm}
	\begin{proof}
		It is sufficient to show that if $\mathscr{R} \to \mathscr{R}'$ is a finite flat morphism between regular local rings and if $\mathfrak{M}$ is an $\mathscr{R}'$-module that is finite free over $\mathscr{R}$, then $\mathfrak{M}$ is a finite free module over $\mathscr{R}'$ because, 
		let $\mathscr{R}=\mathscr{O}_K$ be the ring of integers of the finite extension $K$ of $ \mathbb{Q}_p$. Then $\mathscr{R} \otimes_{\mathbb{Z}_p} W(\kappa)$ is a direct product of rings of integers of finite extensions of $\mathbb{Q}_p$.  
		Hence it is sufficient to show that if $L$ is a finite extension of $\mathbb{Q}_p$ with ring of integers $\mathscr{O}_L$ then the $\mathscr{O}_L[\![u]\!]$-module $\mathfrak{M}$, a finite free $\mathbb{Z}_p[\![u]\!]$-module, is also a free as $\mathscr{O}_L[\![u]\!]$-module. When $\mathfrak{M}=0$, the statement is trivial. Let $\mathfrak{M}$ be non-zero. Since $\mathscr{O}_L[\![u]\!]$ is a regular local ring, all finitely generated modules have finite projective dimensions. So by the Auslander-Buchsbaum formula, we have
		\begin{equation} \label{e1}
		\text{projdim}(\mathfrak{M})+\text{depth}(\mathfrak{M})=\text{depth}(\mathscr{O}_L[\![u]\!])=2,
		\end{equation}
		where \enquote{projdim} and \enquote{depth} respectively denotes the projective dimension and depth of the module $\mathfrak{M}$. Now 
		$\mathfrak{M}$ being a finite free $\mathbb{Z}_p[\![u]\!]$-module, clearly $\{u,p\}$ is a regular sequence for $\mathfrak{M}$ as $\mathbb{Z}_p[\![u]\!]$-module, and so $\{u,p\}$ is a regular sequence for $\mathfrak{M}$ as a $\mathscr{O}_L[\![u]\!]$-module. Therefore the $\text{depth}$ of $\mathfrak{M}$ as $\mathscr{O}_L[\![u]\!]$-module is equal to $2$ i.e., $\text{depth}(\mathfrak{M})=2$. So from equation $ \eqref{e1}$, we get $\text{projdim}(\mathfrak{M})=0$. This means that $\mathfrak{M}$ is projective over the regular local ring $\mathscr{O}_L[\![u]\!]$, and hence $\mathfrak{M}$ is free $\mathscr{O}_L[\![u]\!]$-module. Therefore $\mathfrak{M}$ is $(\mathscr{R} \otimes_{\mathbb{Z}_p} W(\kappa)) [\![u]\!]$-free module. This completes the proof.  
	\end{proof}

	We further continue our investigation toward the projectiveness property of $\mathfrak{M} \in \text{Mod}_{/\mathfrak{S}_\mathscr{R}}^{\varphi}$. For this, we need the help of the following three results:
	\begin{thm} \cite[Theorem~22.3]{Mat} \label{t2.3}
		Let $\mathcal{A}$ be a ring with an ideal $\mathcal{I}$ and an $\mathcal{A}$-module $\mathcal{M}$ such that either $\mathcal{I}$ is a nilpotent ideal or $\mathcal{A}$ is a Noetherian ring and $\mathcal{M}$ is $\mathcal{I}$-adically ideal-separated. Then the following statements are equivalent.
		\begin{enumerate}
			\item[(i)] $\mathcal{M}$ is flat over $\mathcal{A}$;
			\item[(ii)] $\text{Tor}_1^\mathcal{A}(\mathcal{M},\mathcal{N})=0$ for every $\mathcal{A}/\mathcal{I}$-module $\mathcal{N}$;
			\item[(iii)] $\mathcal{M}/\mathcal{I}\mathcal{M}$ is flat over $\mathcal{A}/\mathcal{I}$ and $\mathcal{I} \otimes_\mathcal{A} \mathcal{M}=\mathcal{I} \mathcal{M}$;
			\item[(iv)] $\mathcal{M}/\mathcal{I}\mathcal{M}$ is flat over $\mathcal{A}/\mathcal{I}$ and $\text{Tor}_1^\mathcal{A}(\mathcal{A}/\mathcal{I},\mathcal{M})=0$;
			\item[(v)] $\mathcal{M}/\mathcal{I}\mathcal{M}$ is flat over $\mathcal{A}/\mathcal{I}$ and $\gamma_n:(\mathcal{I}^n/\mathcal{I}^{n+1}) \otimes_{\mathcal{A}/\mathcal{I}}\mathcal{M}/\mathcal{I}\mathcal{M} \to \mathcal{I}^n\mathcal{M}/\mathcal{I}^{n+1}\mathcal{M}$ is an isomorphism for every $n \geq 0$;
			\item[(vi)] $\mathcal{M}/\mathcal{I}\mathcal{M}$ is flat over $\mathcal{A}/\mathcal{I}$ and $\gamma: \oplus_{n \geq 0}\mathcal{I}^n/\mathcal{I}^{n+1} \otimes_{\mathcal{A}/\mathcal{I}} \mathcal{M}/\mathcal{I}\mathcal{M} \to \oplus_{n \geq 0} \mathcal{I}^n/\mathcal{M} \mathcal{I}^{n+1}\mathcal{M}$ is an isomorphism;
			\item[(vii)] $\mathcal{M}/\mathcal{I}^{n+1}\mathcal{M}$ is flat over $\mathcal{A}/\mathcal{I}^{n+1}$ for every $n \geq 0$.
		\end{enumerate}
	\end{thm}
	\begin{lem} \label{l2.4}
		\cite[Tag~0561]{TSP} Let $\mathcal{A} \to \mathcal{B}$ be a ring map of finite type and let $\mathcal{M}$ be a $\mathcal{B}$-module. If $M$ is finitely generated $\mathcal{A}$-module, then it is finitely generated $\mathcal{B}$-module. 
	\end{lem}
	\begin{thm}\cite[Theorem~3.56]{Rot} \label{t2.5}
		Let $\mathcal{A}$ be a commutative ring and $\mathcal{M}$ be a finitely presented $\mathcal{A}$-module, then $\mathcal{M}$ is flat if and only if it is projective.
	\end{thm}
	\begin{thm} \label{t2.6}
		For an arbitrary $\mathbb{Z}_p$-algebra $\mathscr{R}$, if $\mathfrak{M}/u \mathfrak{M}$ is finitely generated projective $\mathscr{R}$-module such that $\mathfrak{M}$ is $u$-adically complete, separated and $u$-torsion free, then $\mathfrak{M}$ is finitely generated and projective as $(\mathscr{R} \otimes_{\mathbb{Z}_p} W(\kappa))[\![u]\!]$-module . 
	\end{thm}
	\begin{proof} 
		Since $\mathfrak{M}$ is a Breuil-Kisin module over $W(\kappa)$, $\mathfrak{M}$ is projective module over $W(\kappa)[\![u]\!]$. For the given $\mathbb{Z}_p$-algebra $\mathscr{R}$, the $(\mathscr{R} \otimes_{\mathbb{Z}_p} W(\kappa))[\![u]\!]$-module $\mathfrak{M}$ will be finitely generated and projective if and only if $\mathfrak{M}/u \mathfrak{M}$ is finitely generated and projective as an $\mathscr{R}$-module. Further, $\mathfrak{M}/u\mathfrak{M}$ is projective module over $(\mathscr{R} \otimes_{\mathbb{Z}_p} W(\kappa))[\![u]\!]$ if and only if it is projective over $\mathscr{R}[\![u]\!]$. On the other hand, the $\mathscr{R}[\![u]\!]$-module $\mathfrak{M}$ is finitely generated and projective once it is $u$-torsion free, $u$-adically complete and separated as well as $\mathfrak{M}/u \mathfrak{M}$ is a finitely generated and projective $\mathscr{R}$-module. But it is equivalent to show that $\mathfrak{M}/u^i \mathfrak{M}, \ i \in \mathbb{N}$ is a projective $\mathscr{R}[u]/u^i$-module. For, if $\mathfrak{M}$ is $u$-adically complete, $u$-adically separated, and $\mathfrak{M}/u^i\mathfrak{M}$ is projective module over $\mathscr{R}[u]/u^i$ for each $i$, and if additionally $\mathfrak{M}/u\mathfrak{M}$ is finitely generated and projective $\mathscr{R}$-module, then $\mathfrak{M}$ is finitely generated and projective over $\mathscr{R}[\![u]\!]$, shown as follows:
		
		Since $\mathfrak{M}$ is $u$-adically complete by assumption and $\mathfrak{M}/u\mathfrak{M}$ is finitely generated, applying the local version of Nakayama lemma, any basis that generates $\mathfrak{M}/u\mathfrak{M}$ lifts to a basis that generates $\mathfrak{M}$, and thus $\mathfrak{M}$ is finitely generated $\mathscr{R}[\![u]\!]$-module. So we get a surjection $\mathscr{R}[\![u]\!]^r \to \mathfrak{M}$ for some $r$. This induces a surjection $\left(\mathscr{R}[u]u^i\right)^r \to \mathfrak{M}/u^i\mathfrak{M}$ for each $i$. Since the target $\mathfrak{M}/u^i\mathfrak{M}$ of these surjections are projective, these surjections split. Such splittings are compatible with $i$ changes. Thus we get a splitting of the original surjection $\mathscr{R}[\![u]\!]^r \to \mathfrak{M}$. Hence $\mathfrak{M}$ is projective over $\mathscr{R}[\![u]\!]$.

		Since $\mathfrak{M}$ is $u$-torsion free, the multiplication map $\mathfrak{M} \to \mathfrak{M}$ by $u$ is injectve. In another word, there exists a short exact sequence of $R$-modules $$ 0 \to \mathfrak{M}/u^i\mathfrak{M} \xrightarrow{u^j} \mathfrak{M}/u^{i+j} \mathfrak{M} \to \mathfrak{M}/u^j \mathfrak{M} \to 0. $$
		Using the conditions (i) and (vii) of Theorem \ref{t2.3}, we conclude that $\mathfrak{M}/u^i\mathfrak{M}$ is a flat module over $\mathscr{R}[u]/u^i$. But $\mathscr{R}[u]/u^i$ is isomorphic to $\mathscr{R}[\![u]\!]/u^i$ and hence $\mathfrak{M}/u^i\mathfrak{M}$ is a flat $\mathscr{R}[\![u]\!]/u^i$-module. We already know that the $\mathscr{R}$-module $\mathfrak{M}/u\mathfrak{M}$ is finitely generated, projective, and hence of finite presentation. For $i=2$, $\mathfrak{M}/u^2\mathfrak{M}$ is an $\mathscr{R}$-module of finite presentation using that fact that $\mathfrak{M}/u\mathfrak{M}$ is an $\mathscr{R}$-module of finite presentation. So by induction on $i$, in the above exact sequence, we get $\mathfrak{M}/u^i\mathfrak{M}$ is an $\mathscr{R}$-module of finite presentation. As $\mathscr{R}[\![u]\!]/u^i$ is a free $\mathscr{R}$-module of rank $i \in \mathbb{N}$, it is finitely generated $\mathscr{R}$-module and hence the ring map $\mathscr{R} \to \mathscr{R}[\![u]\!]/u^i$ is of finite type. Thus $\mathfrak{M}/u^i\mathfrak{M}$ is an $\mathscr{R}[\![u]\!]/u^i$-module of finite presentation by Lemma \ref{l2.4}. Since $\mathfrak{M}/u^i\mathfrak{M}$ is a flat as well as finitely presented $\mathscr{R}[u]/u^i$-module, by Theorem \ref{t2.5} it is projective $\mathscr{R}[\![u]\!]/u^i$-module. Hence $\mathfrak{M}/u^i\mathfrak{M}$ is projective $\mathscr{R}[u]/u^i$-module. This completes the proof. 
	\end{proof}
	\begin{thm} \label{t2.7}
		If $\mathscr{R}$ is a ring such that every finitely generated and $p$-torsion free $\mathscr{R}$-module is projective, then $\mathfrak{M}$ is projective as $(\mathscr{R} \otimes_{\mathbb{Z}_p} W(\kappa)) [\![u]\!]$-module.
	\end{thm}
	\begin{proof}
		Since $\mathfrak{M}/u\mathfrak{M}$ is projective over $W(\kappa)$, it is $p$-torsion free, and therefore projective over $\mathscr{R}$. Hence $\mathfrak{M}$ is projective over $(\mathscr{R} \otimes_{\mathbb{Z}_p} W(\kappa)) [\![u]\!]$ by Theorem \ref{t2.5}.
	\end{proof}
	We note the following result:
	\begin{lem} \label{l2.8}
		\cite[Tag~0AGW]{TSP} Let $\mathcal{I}$ be an ideal of a ring $\mathcal{A}$ and $\mathcal{M}$ be a $\mathcal{A}$-module. Assume that $\mathcal{I}$ is finitely generated, $\mathcal{A}/\mathcal{I}$ is Noetherian ring, $\mathcal{M}/\mathcal{I} \mathcal{M}$ is flat $\mathcal{A}/\mathcal{I}$-module, and $\text{Tor}_{1}^{\mathcal{A}}(\mathcal{M}, \mathcal{A}/\mathcal{I})=0$. Then the $\mathcal{I}$-adic completion $\mathcal{A}^{\wedge}$ is Noetherian and $\mathcal{M}^{\wedge}$ is flat $\mathcal{A}^{\wedge}$-module.
	\end{lem}
	Then we can prove a weaker statement of Theorem \ref{t2.6}:
	\begin{thm} \label{t2.9}
		For an arbitrary Noetherian $\mathbb{Z}_p$-algebra $\mathscr{R}$, if $\mathfrak{M}/u \mathfrak{M}$ is finitely generated projective $\mathscr{R}$-module such that $\mathfrak{M}$ is $u$-adically complete, separated and $u$-torsion free, then $\mathfrak{M}$ is finitely generated and projective as $(\mathscr{R} \otimes_{\mathbb{Z}_p} W(\kappa))[\![u]\!]$-module . 
	\end{thm} 
	\begin{proof}
		Once we note that over the Noetherian ring, every flat module is projective, the proof follows from Lemma \ref{l2.8}.
	\end{proof}
	As a consequence, we have the following weaker statement of Theorem \ref{t2.7}:
	\begin{thm}
		If $\mathscr{R}$ is a Noetherian ring such that every finitely generated and $p$-torsion free $\mathscr{R}$-module is projective, then $\mathfrak{M}$ is projective as $(\mathscr{R} \otimes_{\mathbb{Z}_p} W(\kappa)) [\![u]\!]$-module.
	\end{thm}   
	\begin{proof}
		The proof is immediate from Theorem \ref{t2.7} and Theorem \ref{t2.9}. 
	\end{proof}
	A core in the Breuil-Kisin classification theory of $p$-divisible groups by BK-module is the Frobenious structure on BK-modules, and if a $p$-divisible group has a faithful action of $\mathscr{R}$, then the resulting BK-module attached to $p$-divisible not only has a $\mathscr{R}$-action but also the action is compatible with the Frobenious map $\varphi_{\mathfrak{S}}(u)=u^p$. Exploiting the compatibility of the Frobenious and $\mathscr{R}$-action, we produce the following Theorem \ref{T2.5}. Before proving our result, we recall the definition of fitting ideal of a finitely generated  module and some properties:
	\begin{defn} \cite[Tag~07Z9]{TSP}
		Let $R$ be a ring and $M$ be a finitely generated $R$-module. Choose a presentation $$\oplus_{j \in J}R \to R^{\oplus n} \to M \to 0$$ of $M$, where $J$ is some index set. Let $A=(a_{ij})_{1 \leq i \leq n},~j \in J$ be the matrix of the map  $\oplus_{j \in J}R \to R^{\oplus n}$. The $k^{th}$ fitting ideal $\text{Fitt}_k(M)$ of $M$ is the ideal generated by the $(n-k) \times (n-k)$ minors of $A$, which is independent of the choice of the presentation.
	\end{defn}
	\begin{lem}
		\cite[Proposition~20.8]{DE} \label{L2.4} Let $R$ be a ring and $M$ be a finitely generated $R$-module. Then $M$ is projective of constant rank $r$ if and only if $\text{Fit}_{r-1}(M)=0$ and $\text{Fit}_r(M)=R$. 
	\end{lem}
	\begin{thm} \label{T2.5}
		If $\mathscr{R}$ is a finite flat $\mathbb{Z}_p$-algebra such that $\varphi_{\mathfrak{S}}^{*}\mathfrak{M} \cong \mathfrak{M}$, then $\mathfrak{M}$ is projective.  
	\end{thm}	
	
	\begin{proof}
		Since $\mathfrak{M}$ is finitely generated $\mathfrak{S}_{\mathscr{R}}$-module, we can choose the following presentation for some natural numbers $n,m$:
		$$\mathfrak{S}_{\mathscr{R}}^{\oplus n} \overset{\psi_1}{\longrightarrow} \mathfrak{S}_{\mathscr{R}}^{\oplus m} \overset{\psi_2}{\longrightarrow} \mathfrak{M} \to 0,$$ where the right map $\psi_2$ is surjective by definition. We claim that the above right exact sequence induces an isomorphism when reduced modulo the maximal ideal $\mathfrak{n}$ of $\mathfrak{S}_{\mathscr{R}}$: 
		\begin{align*} & \left(\mathfrak{S}_{\mathscr{R}}/\mathfrak{n}\right)^{\oplus m} \overset{\simeq}{\longrightarrow} \mathfrak{M}/\mathfrak{n}\mathfrak{M} \\
			\text{i.e.,} ~& \mathfrak{S}_{\mathscr{R}}^{\oplus n} \otimes_{\mathfrak{S}_{\mathscr{R}}} \mathfrak{S}_{\mathscr{R}}/ \mathfrak{n} \overset{\simeq}{\longrightarrow} \mathfrak{S}_{\mathscr{R}}^{\oplus m} \otimes_{\mathfrak{S}_{\mathscr{R}}} \mathfrak{S}_{\mathscr{R}}/ \mathfrak{n} \overset{\simeq}{\longrightarrow} \mathfrak{M} \otimes_{\mathfrak{S}_{\mathscr{R}}} \mathfrak{M}/\mathfrak{n} \to 0. \end{align*} So the map $\left(\mathfrak{S}_{\mathscr{R}}/\mathfrak{n}\right)^{\oplus m} \overset{\simeq}{\longrightarrow} \mathfrak{M}/\mathfrak{n}\mathfrak{M}$ must be the zero map, in other words, $\mathfrak{S}_{\mathscr{R}}^{\oplus n}$ goes to $ \mathfrak{n} \mathfrak{S}_{\mathscr{R}}^{\oplus m}$.
		Justification of the claim: As $\mathfrak{M}$ is finite, there is a minimal generating set, say $\{a_1, a_2, \cdots, a_m\}$. Recall the surjective map $$\mathfrak{S}_{\mathscr{R}}^{\oplus m} \overset{\psi_2}{\longrightarrow} \mathfrak{M},~\psi_2(e_i)=a_i,$$ where $\{e_1,e_2, \cdots, e_m\}$ is the basis of $\mathfrak{S}_{\mathscr{R}}^{\oplus m}$. We will show that $\text{ker}(\psi_2) \subset \mathfrak{n}\mathfrak{S}_{\mathscr{R}}^{\oplus m}.$ For let $(b_1,b_2, \cdots, b_m) \in \text{ker}(\psi_2)$, then \begin{align}
			& \nonumber\psi_2 \left((b_1,b_2, \cdots, b_m)\right)=0 \\
			\Rightarrow& b_1a_1+b_2a_2+\cdots+b_ma_m=0. \label{a1}
		\end{align}
		If $b_j \notin \mathfrak{n},$ $1 \leq j \leq m$, then $b_j$ is invertible and hence $a_j= \sum_{i \neq j} b_j^{-1}b_ia_i$ by \eqref{a1}, which implies $a_j$ is redundant in the generating set $\{a_1,a_2, \cdots, a_m\}$ of $\mathfrak{M}$, and this is a contradiction since it was minimal generating set. Therefore $b_j \in \mathfrak{n}$ for all $j$ and so $(b_1,b_2, \cdots, b_m) \in \mathfrak{n}$. Thus $\text{ker}(\psi_2) \subset \mathfrak{n}\mathfrak{S}_{\mathscr{R}}^{\oplus m}.$ Note that $\mathfrak{S}_{\mathscr{R}}$ is Noetherian because $\mathfrak{S}=W(\kappa)[\![u]\!]$ is Noetherian and $\mathscr{R}$ is finite flat $\mathbb{Z}_{p}$-algebra. Let us choose a generating set of $\text{ker}(\psi_2)$ (which exists because $\mathfrak{S}_{\mathscr{R}}$ is Noetherian), then the matrix $A_0=(a_{ij})$ of the map $\psi_1: \mathfrak{S}_{\mathscr{R}}^{\oplus n} \to \mathfrak{S}_{\mathscr{R}}^{\oplus m}$ which sends $\mathfrak{S}_{\mathscr{R}}^{\oplus}$ to $\text{ker}(\psi_2)$ belongs to the maximal ideal $\mathfrak{n}$.
		Since the right exactness property is preserved under base change, applying base change by the Frobenious map $\varphi_{\mathfrak{S}}$ we get the new presentation of $\mathfrak{M}$ as follows:
		\begin{align*}
			&\mathfrak{S}_{\mathscr{R}}^{\oplus n} \overset{\psi_1}{\longrightarrow} \mathfrak{S}_{\mathscr{R}}^{\oplus m} \overset{}{\longrightarrow} \mathfrak{M} \to 0 \\
			\Rightarrow	& \mathfrak{S}_{\mathscr{R}}^{\oplus n} \overset{\varphi_{\mathfrak{S}}(\psi_1)}{\longrightarrow} \mathfrak{S}_{\mathscr{R}}^{\oplus m} \overset{}{\longrightarrow} \mathfrak{M} \otimes_{\varphi_{\mathfrak{S}}, \mathfrak{S}_{\mathscr{R}}} \mathfrak{S}_{\mathscr{R}} \to 0, ~\text{applying base change by $\varphi_{\mathfrak{S}}$} \\
			\Rightarrow & \mathfrak{S}_{\mathscr{R}}^{\oplus n} \overset{\varphi_{\mathfrak{S}}(\psi_1)}{\longrightarrow} \mathfrak{S}_{\mathscr{R}}^{\oplus m} \overset{}{\longrightarrow} \mathfrak{M} \to 0, ~\text{since we assumed $\varphi_{\mathfrak{S}}^{*} \mathfrak{M}=\mathfrak{M} \otimes_{\varphi_{\mathfrak{S}}, \mathfrak{S}_{\mathscr{R}}} \mathfrak{S}_{\mathscr{R}} \cong \mathfrak{M}$}.
		\end{align*} 
		 It is important to note that the maximal ideal $\mathfrak{n}$ is generated by the set $\{u,p\}$. So the matrix entries $a_{ij}$, themselves, are of the form $f(u)=\sum_{i,j\geq 0} p^iu^j$. Here two situations arise when the Frobenious map $\varphi_{\mathfrak{S}}$ map act on the matrix entries $a_{ij}$. 
		 \begin{enumerate}
		 	\item In the 1st case, when the constant term of $a_{ij}$ is zero, the Frobenious map $\varphi_{\mathfrak{S}}(a_{ij})$ lands into $\mathfrak{n}^p$. This happens because if $u \in \mathfrak{n}$ then $u^p \in \mathfrak{n}^p$ i.e., all power series divisible by $u^p$ are in $\mathfrak{n}^p$.  Again repeating the Frobenious map $\varphi_{\mathfrak{S}}(a_{ij}^p)$ lands into $\mathfrak{n}^{p^2}$, and so on.
		 	\item In the 2nd case, when the constant term (which is multiple of $p$) is nonzero, Frobenious map $\varphi_{\mathfrak{S}}$ fixes the constant term but rest other terms maps into $\mathfrak{n}^p$, in other words, Frobenious map $\varphi_{\mathfrak{S}}(a_{ij})$ lands into $\mathfrak{n}+\mathfrak{n}^p \subset \mathfrak{n}$, again repeating the Frobenious map $\varphi_{\mathfrak{S}}(\varphi_{\mathfrak{S}}(a_{ij}))$ lands into $\mathfrak{n}+\mathfrak{n}^{p^2} \subset \mathfrak{n}$, and so on.  
		 \end{enumerate}		
		 Applying base change by the Frobenious map $\varphi_{\mathfrak{S}}$ repeatedly, we get after $N^{th}$ step, $\varphi_{\mathfrak{S}}(a_{ij}) \in \bigcap_{i=1}^{N} \mathfrak{n}^{N}$. Now, let the $k^{th}$ fitting ideal of $\mathfrak{M}$ be $\text{Fitt}_k(\mathfrak{M})$ for $k<m$. Since the fitting ideals of $\mathfrak{M}$ are generated by the minors of the map $\mathfrak{S}_{\mathscr{R}}^{\oplus n} \to \mathfrak{S}_{\mathscr{R}}^{\oplus m}$, we conclude $$\text{Fitt}_k(\mathfrak{M}) \subset \bigcap_{N \geq 1} \mathfrak{n}^N~\text{for all}~k<m.$$ 
		By Krull's intersection theorem, $\bigcap_{N \geq 1} \mathfrak{n}^N=0$, which implies all the fitting ideals of $\mathfrak{M}$ are trivial for $k<m$ i.e., $\text{Fit}_k(\mathfrak{M})=0$ for $k<m$, and since $\mathfrak{M}$ has $m$ generators $\{a_1,a_2, \cdots, a_m\}$ by definition $\text{Fit}_m(\mathfrak{M})=\mathfrak{S}_{\mathscr{R}}.$ Hence by \ref{L2.4}, $\mathfrak{M}$ is projective $\mathfrak{S}_{\mathscr{R}}$-module.
	\end{proof}

	\subsection*{Acknowledgement} The authors are grateful to Olivier Brinon for valuable comments in an old version of this paper. The first author acknowledges \textit{CSIR}, Government of India, for the award of Senior Research Fellowship with File no.-09/025(0249)/2018-EMR-I.

\end{document}